\newtheorem{Thm}{Theorem}[section]
\newtheorem{Prop}[Thm]{Proposition}
\newtheorem{Lem}[Thm]{Lemma}
\newtheorem{Cor}[Thm]{Corollary}
\newtheorem{MThm}{Main Theorem}
\theoremstyle{definition}
\newtheorem{Rem}[Thm]{Remark}
\newcommand{\Cs}{\mbox{${\rm C}^\ast$}}
\newcommand{\id}{\mbox{\rm id}}
\title[Minimal ambient nuclear \Cs -algebras]{Minimal ambient nuclear \Cs -algebras}
\author{Yuhei Suzuki}
\subjclass[2000]{Primary~ 46L05, Secondary~54H20}
\keywords{Amenable actions; ambient nuclear \Cs-algebras.}
\address{Department of Mathematical Sciences,
University of Tokyo, Komaba, Tokyo, 153-8914, Japan}
\email{suzukiyu@ms.u-tokyo.ac.jp}
\begin{document}
\begin{abstract}
We provide examples of ambient nuclear \Cs -algebras of non-nuclear \Cs -algebras
with no proper intermediate \Cs -algebras.
In particular this gives the first examples of minimal ambient nuclear \Cs -algebras of
non-nuclear \Cs -algebras.
For this purpose, we study generic Cantor systems of infinite free product groups.
\end{abstract} 
\maketitle
\section{Introduction}\label{Sec:intro}
In 1979, Choi \cite{Cho} constructed the first example of an ambient nuclear \Cs -algebra of a non-nuclear \Cs -algebra.
In the celebrated paper of Kirchberg--Phillips \cite{KP},
they show that any separable exact \Cs -algebra in fact has an ambient nuclear \Cs -algebra.
(In fact, one can choose it to be isomorphic to the Cuntz algebra $\mathcal{O}_2$.)
When we consider reduced group \Cs -algebras,
thanks to Ozawa's result \cite{Oz}, we have more natural ambient nuclear \Cs -algebras,
namely, the reduced crossed products of amenable dynamical systems.
Ambient nuclear \Cs -algebras play important roles in theory of both \Cs- and von Neumann algebras.
We refer the reader to the books \cite{BO} and \cite{Ror} for details.
In this paper, we investigate how an ambient nuclear \Cs -algebra of a non-nuclear \Cs -algebra
can be tight. See the paper \cite{Oz1} for a related topic.
Based on (new) results on topological dynamical systems,
we give the first example of a minimal ambient nuclear \Cs-algebra of a non-nuclear \Cs -algebra. 
In fact, we have a stronger result: our examples of minimal ambient nuclear \Cs-algebras have no proper intermediate \Cs -algebras.

Note that as shown in \cite{Suz2},
in contrast to injectivity of von Neumann algebras,
nuclearity of \Cs -algebras is not preserved under taking the decreasing intersection.
We also note that the increasing union of non-nuclear \Cs -algebras
can be nuclear. See Remark \ref{Rem:ma} for the detail.
Thus there is no obvious way to provide a minimal ambient nuclear \Cs-algebra.
We also remark that in the von Neumann algebra case, thanks to the bicommutant theorem, for any von Neumann algebra,
finding a minimal ambient injective von Neumann algebra is equivalent
to finding a maximal injective von Neumann subalgebra.
The latter problem draws many reserchers's interest.
In \cite{Pop}, Popa provided the first concrete examples of maximal injective von Neumann subalgebras.
For recent progresses on this problem, see \cite{BC} and \cite{Hou} for instance.

In 1975, Powers \cite{Pow} invented a celebrated method to study structures of the reduced group
\Cs -algebras. His idea has been applied to more general situations, particularly for reduced crossed products,
and to more general groups, by many hands.
See \cite{dlS} for instance.
We combine his technique with certain properties of dynamical systems to obtain the following main theorem of the paper.

We say that a group is an infinite free product group
if it is a free product of infinitely many nontrivial groups.
Throughout the paper, groups are supposed to be countable.
\begin{MThm}[Corollary \ref{Cor:gen}, Theorem \ref{Thm:main}]
Let $\Gamma$ be an infinite free product group with the AP $($approximation property$)$ \cite{HK}
$($or equivalently, each free product component has the AP$)$.
Then there is an amenable action of $\Gamma$ on the Cantor set $X$
with the following property.
There is no proper intermediate \Cs-algebra of the inclusion
${\rm C}^\ast_ r(\Gamma)\subset C(X)\rtimes_r \Gamma$.
In particular $C(X)\rtimes_r \Gamma$ is a minimal
ambient nuclear \Cs -algebra of the non-nuclear \Cs -algebra ${\rm C}^\ast_r(\Gamma)$.
\end{MThm}
We remark that it is not known if there is an ambient injective von Neumann algebra (or equivalently, injective von Neumann subalgebra) of a non-injective von Neumann algebra with no proper intermediate von Neumann algebra.
We refer the reader to \cite{BO} for basic knowledge on \Cs -algebras and discrete groups.
Here we remark that the AP implies exactness (see Section 12.4 of \cite{BO}),
while the converse is not true \cite{LS}.
In Main Theorem, we need the AP to determine when a given element of the reduced crossed product sits in the reduced group \Cs -algebra. (Cf.~ \cite{Suz2}, \cite{Zac}.)

In theory of both measurable and topological dynamical systems,
the Baire category theorem is a powerful tool
to produce an example with a nice property.
For further information on this topic, see the paper \cite{Hoc} and the introduction therein for instance.
We follow this strategy to construct dynamical systems as in Main Theorem.
To apply the Baire category theorem, we need
a nice topology on the set of dynamical systems.
In this paper, we deal with the following (well-known) space of topological dynamical systems.
Let $X$ be a compact metric space with a metric $d_X$.
Then, on the homeomorphism group ${\rm Homeo}(X)$ of $X$,
define a metric $d$ as follows.
\[d(\varphi, \psi):=\max_{x\in X}d_X(\varphi(x), \psi(x))+\max_{x\in X} d_X(\varphi^{-1}(x), \psi^{-1}(x)).\]
Then $d$ defines a complete metric on ${\rm Homeo}(X)$.
The topology defined by $d$
coincides with the uniform convergence topology.
In particular it does not depend on the choice of $d_X$.
Next let $\Gamma$ be a (countable) group and
consider the set $\mathcal{S}(\Gamma, X)={\rm Hom}(\Gamma, {\rm Homeo}(X))$ of all dynamical systems of $\Gamma$ on $X$.
The set $\mathcal{S}(\Gamma, X)$ is naturally identified with a closed subset of the product space
$\prod_\Gamma {\rm Homeo}(X)$, where the latter space is equipped with the product topology.
Since $\Gamma$ is countable, this makes $\mathcal{S}(\Gamma, X)$
a complete metric space.

Finally, we recall some definitions from theory of topological dynamical systems.
Let $\alpha\colon \Gamma \curvearrowright X$ and $\beta \colon \Gamma \curvearrowright Y$
be actions of a group on compact metrizable spaces.
The $\alpha$ is said to be an extension of $\beta$
if there is a $\Gamma$-equivariant quotient map $\pi\colon X\rightarrow Y$.
In this case $\beta$ is said to be a factor of $\alpha$.
The action $\alpha\colon \Gamma \curvearrowright X$ is said to be
\begin{itemize}
\item free if
any $s\in \Gamma \setminus\{e\}$ has no fixed points,
\item minimal
if every $\Gamma$-orbit is dense in $X$,

\item prime if there is no nontrivial factor of $\alpha$,
\item amenable if
for any $\epsilon>0$ and any finite subset $S$ of $\Gamma$,
there is a continuous map
$\mu\colon X\rightarrow {\rm Prob}(\Gamma)$ satisfying
$\|s.\mu^x-\mu^{s.x}\|_1<\epsilon$ for all $s\in S$ and $x\in X.$
\end{itemize}
Here ${\rm Prob}(\Gamma)$ denotes the space of probability measures on $\Gamma$
equipped with the pointwise convergence topology (which coincides with the $\ell^1$-norm topology),
and $\Gamma$ acts on ${\rm Prob}(\Gamma)$ by the left translation.
Obviously freeness and amenability pass to extensions and minimality passes to factors.
Anantharaman-Delaroche \cite{Ana} has characterized amenability of topological dynamical systems by the nuclearity of the reduced crossed product.

We say that a property of topological dynamical systems is open, $G_\delta$, dense, $G_\delta$-dense, respectively
when the subset of $\mathcal{S}(\Gamma, X)$ consisting of actions with this property
has the corresponding property.
We say that a property is generic when the corresponding set contains a $G_\delta$-dense subset of $\mathcal{S}(\Gamma, X)$.
Note that thanks to the Baire category theorem,
the intersection of countably many $G_\delta$-dense properties is again $G_\delta$-dense,
and similarly for genericity.
Although some results (e.g., genericity of amenability, minimality, primeness, for infinite free product groups)
can be extended to more general spaces by minor modifications,
we concentrate on the Cantor set.
This is enough for Main Theorem.
For short, we call an action on the Cantor set a Cantor system.

\subsection*{Notation}
Most notation we use are standard ones.
\begin{itemize}
\item For an action $\alpha\colon \Gamma \curvearrowright X$,
let $C(X)\rtimes_{\rm alg}\Gamma$ denote its algebraic crossed product,
i.e., the $\ast$-subalgebra of the reduced crossed product generated by
$C(X)$ and $\Gamma$.
\item
For the simplicity of notation,
in the reduced crossed product $A=C(X)\rtimes_r \Gamma$,
we denote the unitary of $A$ corresponding to $s\in \Gamma$
by the same symbol $s$.
\item
Denote by $e$ the unit element of a group.
\item
Let $E\colon C(X)\rtimes_r \Gamma \rightarrow C(X)$ denote the canonical conditional expectation on the reduced crossed product.
That is, the unital completely positive map
defined by the formula $E(fs):=\delta_{e, s}f$ for $f\in C(X)$ and $s\in \Gamma$.
\item For a unital \Cs -algebra, we denote by
$\mathbb{C}$ the \Cs -subalgebra generated by the unit.
\item Denote by $\otimes$ the minimal tensor product of \Cs -algebras.
We use the same notation for the minimal tensor product of completely positive maps.
\item For a subset $S$ of a set,
denote by $\chi_S$ the characteristic function of $S$.
\item
For a subset $S$ of a group,
denote by $\langle S \rangle$ the subgroup generated by $S$.
\item
When the action $\alpha\colon \Gamma\curvearrowright X$ is clear from the context,
we denote $\alpha_s(x)$ by $s.x$ for short.
Similarly for $s\in \Gamma$ and $U \subset X$,
we denote $\alpha_s(U)$ by $sU$ when no confusion arises.

\end{itemize}
\section{Some generic properties of Cantor systems}\label{sec:gen}
In this section, we summarize generic properties of Cantor systems.
From now on we denote by $X$ the Cantor set.
We recall that the Cantor set is the topological space
characterized (up to homeomorphism) by the following four properties:
compactness, total disconnectedness, metrizability, and perfectness (i.e., no isolated points).
\begin{Lem}\label{Lem:Gdelta}
For any group $\Gamma$, the following properties are $G_\delta$ in $\mathcal{S}(\Gamma, X)$.
\begin{enumerate}[\upshape (1)]
\item Freeness.
\item Amenability.
\end{enumerate}
\end{Lem}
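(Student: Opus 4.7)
My plan is to treat the two items separately, but in both cases to write the property as a countable intersection of \emph{open} conditions.

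For freeness, I would reduce to showing that, for each fixed $s\in\Gamma\setminus\{e\}$, the set
\[F_s:=\{\alpha\in\mathcal{S}(\Gamma,X):\alpha_s\text{ has a fixed point}\}\]
is closed in $\mathcal{S}(\Gamma,X)$. Since $\Gamma$ is countable, freeness equals the countable intersection of the open complements $F_s^c$. To see that $F_s$ is closed, I would use a routine compactness argument: if $\alpha^{(n)}\to\alpha$ and $\alpha^{(n)}_s(x_n)=x_n$, extract a convergent subsequence $x_n\to x$, and combine uniform convergence of $\alpha^{(n)}_s$ to $\alpha_s$ with continuity of $\alpha_s$ at $x$ to conclude $\alpha_s(x)=x$. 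This step is essentially bookkeeping and I do not anticipate obstacles.

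For amenability, I would write the defining condition as
\[\text{Am}(\Gamma,X)=\bigcap_{n\in\mathbb{N}}\bigcap_{S\subset\Gamma\text{ finite}}U_{n,S},\]
where $U_{n,S}$ is the set of actions $\alpha$ admitting a continuous map $\mu:X\to\mathrm{Prob}(\Gamma)$ with $\|s.\mu^x-\mu^{\alpha_s(x)}\|_1<1/n$ for all $s\in S$ and $x\in X$. Because $\Gamma$ is countable, this is a countable intersection, so it is enough to show each $U_{n,S}$ is open. Given $\alpha\in U_{n,S}$, I would first approximate the witness $\mu$ by a finitely supported continuous map (using density of finitely supported measures in $\ell^1$, a partition-of-unity argument, and compactness of $\mu(X)$) so that the bound becomes $<1/n-\delta$ for some $\delta>0$. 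Then I would exploit the fact that a continuous map $\mu:X\to\ell^1(\Gamma)$ is uniformly continuous on the compact $X$: for $\alpha'$ sufficiently close to $\alpha$ in the product topology, $\alpha'_s$ is uniformly close to $\alpha_s$ on $X$ for each $s\in S$, hence $x\mapsto\mu^{\alpha'_s(x)}$ is uniformly close to $x\mapsto\mu^{\alpha_s(x)}$ in $\ell^1$-norm. The left-translation term $s.\mu^x$ does not depend on the action, so the triangle inequality yields the bound $<1/n$ for $\alpha'$, giving $\alpha'\in U_{n,S}$.

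The main technical point is the openness of $U_{n,S}$; once reduced to a finitely supported $\mu$, everything follows from uniform continuity on $X$ and the triangle inequality, so I expect no real obstacle beyond carefully choosing the approximating $\mu$ and the neighborhood of $\alpha$.
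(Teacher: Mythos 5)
Your proposal is correct and follows essentially the same route as the paper: freeness is the countable intersection over $s\in\Gamma\setminus\{e\}$ of the open sets of actions where $\alpha_s$ is fixed-point free (openness via compactness of $X$), and amenability is a countable intersection of open conditions each witnessed by a continuous map $\mu\colon X\to\mathrm{Prob}(\Gamma)$, with openness following from (uniform) continuity of $\mu$ on the compact space $X$. The paper's proof is terser — it does not spell out the subsequence argument or the $\delta$-gap and omits your (harmless but unnecessary) finitely-supported approximation step, since compactness already gives a strict gap below the threshold — but the underlying argument is the same.
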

\begin{proof}
The first claim is well-known.
For completeness, we include a proof.

\noindent
(1): For $s\in \Gamma$,
set $V_s:=\{\alpha \in \mathcal{S}(\Gamma, X): \alpha_s(x) \neq x {\rm\ for\ all\ }x\in X\}$.
By the compactness of $X$, each $V_s$ is open.
The $G_\delta$-set
$\bigcap_{s\in \Gamma \setminus \{e\}} V_s$ consists of all free Cantor systems.

\noindent
(2): For each finite subset $S$ of $\Gamma$, we say that an action $\alpha \colon \Gamma \curvearrowright X$ has property $\mathcal{A}_S$ if
it admits a continuous map
$\mu\colon X \rightarrow {\rm Prob}(\Gamma)$
satisfying $$\|s.\mu^x - \mu^{s.x}\|_1<\frac{1}{|S|}$$
for all $s\in S$ and $x\in X$.
Let $\alpha\in \mathcal{S}(\Gamma, X)$ be given and suppose we have a continuous map $\mu$ that witnesses $\mathcal{A}_S$ of $\alpha$.
Then, by the continuity of $\mu$, it guarantees $\mathcal{A}_S$ for any $\beta$ sufficiently close to $\alpha$.
This shows that $\mathcal{A}_S$ is open.
Now obviously, the intersection $\bigwedge_S \mathcal{A}_S$ is equivalent to amenability,
where $S$ runs over finite subsets of $\Gamma$.
\end{proof}

The following simple lemma is crucial to show the genericity of some properties.
\begin{Lem}\label{Lem:prod}
Let $\alpha \colon \Gamma \curvearrowright X$ be a given Cantor system.
Then the set of extensions of $\alpha$
is dense in $\mathcal{S}(\Gamma, X)$.
\end{Lem}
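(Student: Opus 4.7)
The plan is to approximate any given target $\beta \in \mathcal{S}(\Gamma, X)$ by the product action $\alpha \times \beta$ on $X \times X$, transported to $X$ via a homeomorphism that is close to the second projection. Since the first coordinate projection $\pi_1\colon X\times X \to X$ intertwines $\alpha \times \beta$ with $\alpha$, the resulting action on $X$ will automatically be an extension of $\alpha$.

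Here are the steps in order. Fix a basic neighborhood of $\beta$, i.e., a finite set $S\subset \Gamma$ and $\epsilon>0$. First, by uniform continuity of the finitely many homeomorphisms $\beta_s,\beta_s^{-1}$ for $s\in S$, I pick $\delta>0$ so small that points within $2\delta$ in $X$ are carried by each $\beta_s^{\pm 1}$ to points within $\epsilon/2$. Second, I choose a clopen partition $X = V_1 \sqcup \cdots \sqcup V_n$ with $\operatorname{diam}(V_i)<\delta$. Third, since every nonempty clopen subset of the Cantor set is a Cantor set and $X\times V_i$ is also a Cantor set, I fix homeomorphisms $\phi_i\colon X\times V_i \to V_i$ and glue them to a homeomorphism $\phi\colon X\times X \to X$ with $\phi(X\times V_i) = V_i$ for every $i$; in particular $d_X(\phi(x,y),y) < \delta$ for all $(x,y)$. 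Finally I set $\gamma_s := \phi \circ (\alpha_s\times\beta_s) \circ \phi^{-1}$; the map $\pi_1\circ \phi^{-1}\colon X\to X$ is a continuous surjection of compact Hausdorff spaces (hence a quotient map) and intertwines $\gamma$ with $\alpha$, so $\gamma$ is an extension of $\alpha$.

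For closeness, given $x\in X$ write $\phi^{-1}(x) = (u,v)$; then $v$ lies in the same $V_i$ as $x$, so $d_X(v,x)<\delta$. Applying $\beta_s$ gives $d_X(\beta_s(v),\beta_s(x)) < \epsilon/2$, while $\gamma_s(x) = \phi(\alpha_s(u),\beta_s(v))$ lies in the same partition piece as $\beta_s(v)$, so $d_X(\gamma_s(x),\beta_s(v)) < \delta$. The triangle inequality yields $d_X(\gamma_s(x),\beta_s(x)) < \epsilon/2 + \delta$, which (after shrinking $\delta$ appropriately) is below $\epsilon/2$; running the symmetric argument for $\gamma_s^{-1}$ gives $d(\gamma_s,\beta_s)<\epsilon$ for every $s\in S$.

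The only genuinely nontrivial ingredient is the construction of $\phi$, and this reduces to a basic topological fact about the Cantor set; the rest is bookkeeping of approximation constants combined with uniform continuity. I anticipate no real obstacle beyond choosing $\delta$ small enough at the start to absorb both the diameter bound and the continuity modulus of the finite family $\{\beta_s^{\pm 1}:s\in S\}$.
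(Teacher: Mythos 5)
Your proof is correct and follows the same strategy as the paper: realize the diagonal action $\alpha\times\beta$ on $X$ via a homeomorphism (for you, $\phi\colon X\times X\to X$; in the paper, $\varphi_N\colon X\to Y\times X$ with $X=Y^{\mathbb{N}}$) chosen close to the projection onto the $\beta$-factor, so that the projection onto the $\alpha$-factor witnesses the extension while the conjugated action stays $\epsilon$-close to $\beta$. The only difference is how that homeomorphism is produced --- the paper inserts the $\alpha$-coordinate at position $N$ and lets $N\to\infty$, which makes convergence automatic from the product metric, whereas you build $\phi$ from a fine clopen partition and run explicit uniform-continuity estimates; both are valid.
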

\begin{proof}
Let us regard the Cantor set $X$ as the direct product of infinitely many copies $Y$ of the Cantor set:
$X=Y{^\mathbb{N}}$.
We regard $\alpha$ as a dynamical system on $Y$ via a homeomorphism $X\cong Y$.
For each $N \in \mathbb{N}$,
define a map $\sigma_N\colon \mathbb{N} \rightarrow \mathbb{N}$
by
\begin{eqnarray*}
 \sigma_N(n):=\left\{ \begin{array}{ll}
 n & {\rm\ when \ } n< N,\\
 n+1 & {\rm\ when \ }n\geq N.\\
 \end{array} \right.
 \end{eqnarray*} 
Now let $\beta\in \mathcal{S}(\Gamma, X)$ be given.
Let $\gamma\colon \Gamma \curvearrowright Y \times X$ be the diagonal action of $\alpha$ and $\beta$.
For each $N\in \mathbb{N}$, define a homeomorphism $\varphi_N \colon X \rightarrow Y \times X$
by $\varphi_N(x):=(x_N, (x_{\sigma_N(n)})_{n \in \mathbb{N}})$.
Put $\beta^{(N)}:=\varphi_N^{-1} \circ \gamma \circ\varphi_N \in \mathcal{S}(\Gamma, X)$.
Then for each $N\in \mathbb{N}$, the projection from $X$ onto the $N$th coordinate gives
a factor map of $\beta^{(N)}$ onto $\alpha$.
Moreover the sequence $(\beta^{(n)})_{n=1}^\infty$ converges to $\beta$.
Since $\beta$ is arbitrary, this proves the claim.
\end{proof}
The next lemma is well-known.
For completeness, we give a proof.
\begin{Lem}\label{Lem:can}
Every group admits a free Cantor system.
Also, every exact group admits an amenable Cantor system.
\end{Lem}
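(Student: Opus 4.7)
The plan is to treat the two assertions separately. For the first, I would exhibit, for any countable group $\Gamma$, a free continuous action on a Cantor set by constructing a free subshift of the Bernoulli system $\Gamma\curvearrowright\{0,1\}^\Gamma$. Concretely, I would seek a point $x\in\{0,1\}^\Gamma$ whose orbit closure $Y:=\overline{\Gamma\cdot x}$ consists entirely of aperiodic points, i.e., every $y\in Y$ has trivial $\Gamma$-stabilizer. Then $Y$ is a closed $\Gamma$-invariant, compact metrizable, totally disconnected subset of $\{0,1\}^\Gamma$ on which $\Gamma$ acts freely. To additionally guarantee perfectness (so that $Y$ is a Cantor set), I would replace $Y$ by the product $Y\times K$ with an auxiliary Cantor set $K$ carrying the trivial $\Gamma$-action; freeness is preserved since the original $\Gamma$-action on $Y$ is free. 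The existence of such an $x$ (a ``strongly aperiodic coloring'') for an arbitrary countable group is a result of Gao--Jackson--Seward; this is the nontrivial input. For residually finite $\Gamma$, a more elementary alternative is the natural free action on the profinite completion $\widehat\Gamma$, which is a Cantor set when $\Gamma$ is infinite residually finite.

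For the second assertion, the plan is to combine the standard characterization of exactness with a descent argument. By the Anantharaman-Delaroche--Ozawa characterization, exactness of $\Gamma$ is equivalent to the topological amenability of the natural action $\Gamma\curvearrowright\beta\Gamma$ by left translation. Fix an exhaustion of $\Gamma$ by finite subsets $S_1\subset S_2\subset\cdots$ and, for each $n$, choose a continuous amenability witness $\mu_n\colon\beta\Gamma\to{\rm Prob}(\Gamma)$ satisfying $\|s.\mu_n^x-\mu_n^{sx}\|_1<1/n$ for $s\in S_n$. Since ${\rm Prob}(\Gamma)$ is separable metric, each $\mu_n$ is determined by the pullback of countably many continuous functions on ${\rm Prob}(\Gamma)$, and since $C(\beta\Gamma)=\ell^\infty(\Gamma)$ is generated by projections, these pullbacks can be encoded by a countable family of projections in $\ell^\infty(\Gamma)$. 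Let $B\subset\ell^\infty(\Gamma)$ be the unital $\Gamma$-invariant \Cs-subalgebra generated by this family together with its $\Gamma$-translates. Then $B$ is separable and generated by projections, so its Gelfand spectrum $Y:={\rm Spec}(B)$ is a compact metrizable $0$-dimensional $\Gamma$-space through which each $\mu_n$ factors, yielding continuous descended witnesses $\widetilde\mu_n\colon Y\to{\rm Prob}(\Gamma)$. Thus $\Gamma\curvearrowright Y$ is amenable. Finally, replacing $Y$ by $Y\times K$ (diagonal action) for a free Cantor system $\Gamma\curvearrowright K$ from the first assertion produces a compact metrizable $0$-dimensional perfect $\Gamma$-space, i.e., a Cantor set, with an amenable $\Gamma$-action, since amenability passes to extensions.

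The principal obstacle in the first assertion is the construction of the free subshift, which for general countable groups rests on the Gao--Jackson--Seward theorem. In the second assertion, the delicacy lies in arranging both the $0$-dimensionality of $Y$ (ensured by generating $B$ by projections) and the perfectness (ensured by the product with the Cantor set from the first part) while preserving amenability during the descent from $\beta\Gamma$ to $Y$.
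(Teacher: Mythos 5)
Your proposal is correct, but the first half takes a genuinely different (and much heavier) route than the paper. For the amenable part you essentially reproduce the paper's argument: both use Ozawa's characterization of exactness via amenability of $\Gamma\curvearrowright\beta\Gamma$ and then descend to a separable $\Gamma$-invariant unital \Cs-subalgebra of $\ell^\infty(\Gamma)$ generated by countably many projections, obtaining a metrizable totally disconnected amenable $\Gamma$-space, and finally take a diagonal product with a Cantor set to force perfectness; the paper phrases the descent via the increasing net of all such subalgebras and a citation of Theorem 5.1.7 of \cite{BO}, while you spell out which functions must be swallowed into the subalgebra, which is fine. For the free part, however, the paper does \emph{not} need anything like a free subshift: it first shows that left translation on $\beta\Gamma$ is free by an elementary trick --- for $s\neq e$ one maps $\beta\Gamma$ equivariantly onto $\beta\langle s\rangle$, which by universality factors onto a minimal free compact $\langle s\rangle$-system, so $s$ has no fixed point --- and then performs the same descent to a metrizable totally disconnected factor (for $\mu$ large enough the finitely many clopen sets witnessing $\alpha_s(U)\cap U=\emptyset$ live in $A_\mu$, for each of the countably many $s$). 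Your appeal to the Gao--Jackson--Seward theorem on strongly aperiodic colorings of $\{0,1\}^\Gamma$ is a valid but far stronger input: it produces a free \emph{subshift}, which is a substantially deeper fact than the mere existence of a free Cantor system, so you are trading a short self-contained argument for a black box. The residually finite alternative via the profinite completion is correct but does not cover the general case. Everything else (products with a trivially-acted-on Cantor set to ensure perfectness, preservation of freeness and amenability under extensions) matches the paper.
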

\begin{proof}
Let $\Gamma$ be a group.
We first show that the left translation action
of $\Gamma$ on its Stone--\v{C}ech compactification $\beta \Gamma$ is free.
Let $s\in \Gamma \setminus\{ e\}$ be given. Put $\Lambda:=\langle s \rangle$.
Take a $\Lambda$-equivariant map
$\Gamma \rightarrow \Lambda$
where $\Lambda$ acts on both groups by the left multiplication.
This extends to the $\Lambda$-equivariant quotient map
$\beta\Gamma \rightarrow \beta \Lambda$.
By universality,
$\beta\Lambda$ factors onto every minimal dynamical system of $\Lambda$ (on a compact space).
Since any cyclic group admits a minimal free action on a compact space,
this shows that $s$ has no fixed points in $\beta \Gamma$.

Let $(A_\mu)_{\mu \in M}$ be the increasing net of
$\Gamma$-invariant unital \Cs -subalgebras of $\ell^\infty(\Gamma)=C(\beta \Gamma)$ generated by countably many projections.
Note that $\bigcup_{\mu \in M} A_\mu = \ell^\infty (\Gamma)$.
Let $X_\mu$ denote the spectrum of $A_\mu$.
Obviously, each $X_\mu$ is totally disconnected and metrizable.
Let $\alpha_\mu \colon \Gamma \curvearrowright X_\mu$ be the action induced from
the action $\Gamma \curvearrowright A_\mu$.
By the freeness of $\Gamma \curvearrowright \beta \Gamma$, for sufficiently large $\mu$,
the $\alpha_\mu$ must be free.
When $\Gamma$ is exact,
then as stated in Theorem 5.1.7 of \cite{BO}, for sufficiently large $\mu$,
the $\alpha_\mu$ must be amenable.
Hence for sufficiently large $\mu$,
the diagonal action of $\alpha_\mu$ and the trivial Cantor system gives the desired action.
\end{proof}
We now summarize the results of this section.
\begin{Cor}\label{Cor:gen}
For any group $\Gamma$, freeness is a $G_\delta$-dense property in $\mathcal{S}(\Gamma, X)$.
Moreover, when $\Gamma$ is exact,
amenability is also a $G_\delta$-dense property in $\mathcal{S}(\Gamma, X)$.
\end{Cor}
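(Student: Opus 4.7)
The plan is to assemble the three preceding lemmas together with the observation (made in the introduction) that freeness and amenability both pass to extensions. Since Lemma \ref{Lem:Gdelta} already gives the $G_\delta$ part of each statement, what remains is to establish density, and the combination of Lemma \ref{Lem:prod} and Lemma \ref{Lem:can} is tailor-made for this.

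More precisely, for the freeness statement, I would first invoke Lemma \ref{Lem:can} to fix a free Cantor system $\alpha_0 \colon \Gamma \curvearrowright X$. By Lemma \ref{Lem:prod}, the set of extensions of $\alpha_0$ is dense in $\mathcal{S}(\Gamma, X)$. Since freeness passes to extensions, every such action is free; hence the set of free actions is dense. Combined with the $G_\delta$-ness from Lemma \ref{Lem:Gdelta}(1), this gives $G_\delta$-dense.

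For the amenability statement under the exactness hypothesis, the argument is literally the same, with the free Cantor system replaced by an amenable Cantor system (which exists by the second half of Lemma \ref{Lem:can}, using exactness), and with the passage-to-extensions property of amenability replacing that of freeness. The $G_\delta$-ness then comes from Lemma \ref{Lem:Gdelta}(2).

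There is no real obstacle here; all the work has been done in the preceding lemmas, and the corollary is a one-line assembly. The only minor point worth double-checking in writing is that the extensions produced by Lemma \ref{Lem:prod} land in $\mathcal{S}(\Gamma, X)$ itself (which they do, since $Y \times X$ is again a Cantor set via a fixed homeomorphism with $X$), so that density is density in the intended space.
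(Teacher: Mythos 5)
Your proposal is correct and is exactly the paper's argument: the paper's proof is the one-line observation that since freeness and amenability pass to extensions, the corollary follows from Lemmas \ref{Lem:Gdelta}, \ref{Lem:prod}, and \ref{Lem:can}, which is precisely the assembly you describe. No issues.
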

\begin{proof}
Since both freeness and amenability are inherited to extensions,
it follows from Lemmas \ref{Lem:Gdelta} through \ref{Lem:can}.
\end{proof}

\section{Construction of dynamical systems and proof of Main Theorem}\label{sec:main}
In this section, we prove Main Theorem.
Let $(\Gamma_i)_{i=1}^\infty$ be a sequence of nontrivial groups and
let $\Gamma:=\lower0.25ex\hbox{\LARGE $\ast$}_{i=1}^\infty \Gamma_i$ be their free product.
By replacing $\Gamma_i$ by $\Gamma_{2i-1}\ast \Gamma_{2i}$ for all $i$ if necessary,
in the rest of the paper, we assume that each free product component $\Gamma_i$
contains a torsion-free element.
We start with the following elementary lemmas.
We remark that in the case that $\Gamma$ is the free group $\mathbb{F}_\infty$,
we do not need these lemmas.
\begin{Lem}\label{Lem:ext}
Let $\Lambda$ be a group and $\Upsilon$ be its subgroup.
Then for any minimal dynamical system $\alpha$ of $\Upsilon$ on a compact metrizable space,
there is a Cantor system of $\Lambda$
whose restriction on $\Upsilon$ is an extension of $\alpha$.
\end{Lem}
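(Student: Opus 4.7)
The plan is a twisted co-induction construction. Begin with any continuous surjection $p : C \to K$ where $C$ is the Cantor set, which exists because every compact metrizable space is a continuous image of the Cantor set. Replacing $C$ by $C \times C'$ for another Cantor set $C'$ and composing with the first projection, we may further arrange that every fiber $p^{-1}(k)$ is nonempty and has no isolated points.

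Define
\[X := \bigl\{x \in C^\Lambda : p(x_{\lambda u}) = \alpha_{u^{-1}}(p(x_\lambda)) \text{ for all } \lambda \in \Lambda,\ u \in \Upsilon\bigr\}\]
and equip it with the $\Lambda$-action $\beta$ given by $(\beta_g x)_\lambda := x_{g^{-1}\lambda}$. A direct check shows that $X$ is closed in $C^\Lambda$ and invariant under $\beta$, so $\beta$ is a continuous action of $\Lambda$ by homeomorphisms on a compact, metrizable, totally disconnected space.

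To see that $X$ is the Cantor set, nonemptiness and perfectness remain. For nonemptiness, fix coset representatives $R$ for $\Lambda/\Upsilon$, choose $x_r \in C$ freely for each $r \in R$, and for $\lambda = r u$ take $x_\lambda \in p^{-1}(\alpha_{u^{-1}}(p(x_r)))$, which is nonempty by surjectivity of $p$. For perfectness, given $x \in X$ and a basic open neighborhood constraining only finitely many coordinates, pick any unconstrained coordinate $\lambda$ and replace $x_\lambda$ by a distinct point of the same fiber $p^{-1}(p(x_\lambda))$; since the defining relation involves $x$ only through $p \circ x$, the result still lies in $X$, and such a replacement is possible because the fibers of $p$ are perfect.

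Finally, define $\pi : X \to K$ by $\pi(x) := p(x_e)$. This is continuous and surjective, and the defining relation applied at $\lambda = e$ with $u^{-1}$ in place of $u$ yields $\pi(\beta_u x) = p(x_{u^{-1}}) = \alpha_u(p(x_e)) = \alpha_u(\pi(x))$ for every $u \in \Upsilon$, so $\pi$ witnesses that $\beta|_\Upsilon$ is an extension of $\alpha$. The main subtlety is the perfectness verification, which is why we pre-process $p$ to have perfect fibers; note that the minimality hypothesis on $\alpha$ plays no role in this construction.
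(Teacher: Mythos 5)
Your proof is correct, and it takes a genuinely different route from the paper. The paper fixes a point $y$ in the phase space $Y$ of $\alpha$, uses minimality to extend $s\mapsto s.y$ to a \emph{surjection} $\beta\Upsilon\to Y$, thereby embeds $C(Y)$ into $\ell^\infty(\Upsilon)$ and then, via the coset decomposition, into $\ell^\infty(\Lambda)$; it then passes to a separable $\Lambda$-invariant subalgebra generated by projections and takes its spectrum, finally multiplying by a trivial Cantor system to force perfectness. You instead build the space directly as a ``twisted co-induction'' inside $C^\Lambda$, which avoids the Stone--\v{C}ech compactification and Gelfand duality entirely and --- as you correctly note --- dispenses with the minimality hypothesis, which the paper genuinely uses (without it the map $\beta\Upsilon\to Y$ would only hit the orbit closure of $y$). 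Both arguments hinge on the same structural input, the coset decomposition of $\Lambda$ over $\Upsilon$; yours trades the paper's soft functoriality for explicit point-set verifications, all of which you carry out correctly (invariance, closedness, nonemptiness via coset representatives, perfectness via perfect fibers, and equivariance of $\pi(x)=p(x_e)$). Two cosmetic points: surjectivity of $\pi$ deserves the one-line remark that in the nonemptiness construction one may take $e$ as a coset representative and prescribe $x_e$ arbitrarily in any fiber; and your perfectness argument as phrased needs an unconstrained coordinate, hence $\Lambda$ infinite --- for finite $\Lambda$ (allowed by the statement, though not needed in the application) one instead perturbs a constrained coordinate within its open constraint, using that $x_\lambda$ is non-isolated in its perfect fiber.
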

\begin{proof}
Let $\alpha \colon \Upsilon \curvearrowright Y$ be an action as in the statement.
Fix an element $y\in Y$.
Then the map $\Upsilon \rightarrow Y$
defined by $s\mapsto s.y$ extends to
a factor map $\beta \Upsilon \rightarrow Y$.
This induces an $\Upsilon$-equivariant unital embedding of $C(Y)$ into $\ell^\infty(\Upsilon)$.
By the right coset decomposition of $\Lambda$ with respect to $\Upsilon$,
we have an $\Upsilon$-equivariant unital embedding of $\ell^\infty(\Upsilon)$ into $\ell^\infty(\Lambda)$.
We identify $C(Y)$ with a unital $\Upsilon$-invariant \Cs -subalgebra
of $\ell^\infty(\Lambda)$ via the composite of these two embeddings.
Take a $\Lambda$-invariant \Cs-subalgebra $A$ of $\ell^\infty(\Lambda)$ which contains $C(Y)$
and is generated by countably many projections.
Let $Z$ be the spectrum of $A$. Note that $Z$ is metrizable and totally disconnected.
Let $\beta \colon \Lambda \curvearrowright Z$ be the action
induced from the action $\Lambda \curvearrowright A$.
Since $A$ contains $C(Y)$ as a unital \Cs -subalgebra,
the restriction of $\beta$ on $\Upsilon$ is an extension of $\alpha$.
Now the diagonal action of $\beta$ with the trivial Cantor system
gives the desired Cantor system.
\end{proof}
\begin{Lem}\label{Lem:R}
Let $\Lambda$ be a group. Let $s$ be a torsion-free element of $\Lambda$.
Then for any finite family $\mathcal{U}=\{U_1, \ldots, U_n \}$ of pairwise disjoint proper clopen subsets of $X$,
there is a Cantor system $\alpha \colon \Lambda \curvearrowright X$ with
$s U_i= U_{i+1}$ for all $i$.
Here and below, we put $U_{n+1}:=U_1$ for convenience.
\end{Lem}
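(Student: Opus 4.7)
The plan is to build an auxiliary Cantor system $\alpha'\colon \Lambda \curvearrowright X'$ on some Cantor set $X'$, together with pairwise disjoint proper clopen subsets $U_1', \ldots, U_n'\subset X'$ cyclically permuted by $s$, and then transport $\alpha'$ to $X$ via a homeomorphism $h\colon X\to X'$ satisfying $h(U_i)=U_i'$ for each $i$. Since every nonempty clopen subset of the Cantor set is itself a Cantor set, such an $h$ exists provided the partition types of $(U_1, \ldots, U_n, W)$ and $(U_1', \ldots, U_n', W')$ agree, where $W:=X\setminus \bigsqcup_i U_i$ and $W':=X'\setminus \bigsqcup_i U_i'$. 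We may assume every $U_i$ is nonempty (otherwise the claim is trivial) and will arrange by design whether $W'$ is empty.

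For the auxiliary system, first fix a Cantor set $Y$ and pairwise disjoint proper clopen $V_1, \ldots, V_n\subset Y$, chosen so that $\bigsqcup_i V_i = Y$ if and only if $\bigsqcup_i U_i = X$. Each $V_i$ is homeomorphic to the Cantor set, so one picks homeomorphisms $\phi_i\colon V_i\to V_{i+1}$ (indices cyclic) and glues them to a homeomorphism $\sigma$ of $\bigsqcup_i V_i$, extended by the identity on the complement. Because $s$ is torsion-free, $\langle s\rangle \cong \mathbb{Z}$, and so $s\mapsto \sigma$ yields a well-defined $\langle s\rangle$-action on $Y$. Next, I would extend this $\langle s\rangle$-action to a $\Lambda$-action on $X':=Y^T$ (a Cantor set, since a transversal $T$ for $\Lambda/\langle s\rangle$ is countable) by coinduction: pick $T$ with $e\in T$, and for $g\in \Lambda$ and $t\in T$ write $gt=t' s^{k(g,t)}$ uniquely with $t'\in T$ and $k(g,t)\in \mathbb{Z}$; then set $\alpha'_g(x)_{t'}:=\sigma^{k(g,t)}(x_t)$. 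The relation $\alpha'_{g_1}\circ \alpha'_{g_2}=\alpha'_{g_1 g_2}$ follows immediately from the uniqueness of the transversal decomposition. Setting $U_i':=\{x\in X': x_e\in V_i\}$ and observing that $s\cdot e=e\cdot s$ gives $\alpha'_s(x)_e=\sigma(x_e)$, one obtains $\alpha'_s(U_i')=U_{i+1}'$ as desired.

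Finally, by the first paragraph there is a homeomorphism $h\colon X\to X'$ with $h(U_i)=U_i'$ for all $i$ and $h(W)=W'$; defining $\alpha_g:=h^{-1}\circ \alpha'_g\circ h$ then yields a $\Lambda$-action on $X$ with $\alpha_s(U_i)=U_{i+1}$. The only step requiring careful checking is the well-definedness of the coinduced action, which reduces to a short computation hinging on the uniqueness of the decomposition $gt=t's^k$; everything else is routine bookkeeping that exploits the homogeneity of the Cantor set, namely that any two nonempty clopen subsets are homeomorphic.
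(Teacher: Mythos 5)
Your proof is correct, and it reaches the same two-stage structure as the paper's argument (build a model $\Lambda$-system carrying a clopen family cyclically permuted by $s$, then conjugate by a homeomorphism supplied by the homogeneity of the Cantor set), but the model is produced differently. The paper invokes its Lemma \ref{Lem:ext}: the transitive $\langle s\rangle$-action on $\{1,\dots,n\}$ is a minimal system of the subgroup $\langle s\rangle\cong\mathbb{Z}$, so there is a Cantor system of $\Lambda$ whose restriction to $\langle s\rangle$ factors onto it, yielding a clopen partition $\{V_1,\dots,V_n\}$ of $X$ with $sV_i=V_{i+1}$; the possible gap between $\bigsqcup U_i$ and $X$ is then absorbed by crossing with a one- or two-point set on which $\Lambda$ acts trivially. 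You instead build the $\langle s\rangle$-action on $Y$ by hand (gluing homeomorphisms $V_i\to V_{i+1}$, which is legitimate precisely because $s$ is torsion-free, so no relation $\sigma^m=\mathrm{id}$ must be respected) and then coinduce to $\Lambda$ via $Y^{\Lambda/\langle s\rangle}$; your cocycle computation $k(g_1g_2,t)=k(g_1,t')+k(g_2,t)$ is the standard verification and is sound, and projecting onto the $e$-coordinate shows your coinduced system restricted to $\langle s\rangle$ is exactly the kind of extension Lemma \ref{Lem:ext} produces abstractly via $\beta\Upsilon$ and $\ell^\infty(\Lambda)$. So your route is more self-contained and explicit, at the cost of redoing a special case of machinery the paper needs anyway; you also handle the complement $X\setminus\bigsqcup U_i$ by adjusting the model partition rather than by a product trick, which is equally valid. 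One small caveat: your parenthetical that the claim is ``trivial'' if some $U_i$ is empty is not quite right --- if some but not all $U_i$ were empty the conclusion would be impossible, since $sU_i=U_{i+1}$ propagates emptiness around the cycle --- but the lemma is plainly intended for nonempty sets (as in all its applications), so this does not affect your argument.
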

\begin{proof}
By Lemma \ref{Lem:ext}, there is a Cantor system $\alpha\colon \Lambda \curvearrowright X$
whose restriction on $\langle s \rangle$ factors a transitive action on the set $\{1, \ldots, n\}$.
For such $\alpha$, there is a partition $\{V_1, \ldots, V_n\}$ of $X$ by clopen subsets satisfying
$sV_i=V_{i+1}$ for all $i$.
Set $I:= \{0, 1\}$ if $\bigcup_{i=1}^n U_i \neq X$.
Otherwise we set $I:= \{0 \}$.
Then define a new action $\beta \colon \Lambda \curvearrowright X \times I$
by \begin{eqnarray*}
 \beta_t(x, j):=\left\{ \begin{array}{ll}
 (\alpha_t(x), 0) & {\rm\ when \ } j=0,\\
 (x, 1) &  {\rm\ otherwise}.\\
 \end{array} \right.
 \end{eqnarray*}
Since nonempty clopen subsets of the Cantor set are mutually homeomorphic,
there is a homeomorphism $\varphi \colon X\times I \rightarrow X$
which maps $V_i\times \{ 0 \}$ onto $U_i$ for each $i$.
For such $\varphi$, the conjugate $\varphi\circ \beta \circ \varphi^{-1}$ gives the desired Cantor system.
\end{proof}
We next introduce a property of Cantor systems which is one of the key of the proof of Main Theorem
and show that this property is $G_\delta$-dense for infinite free product groups.
\begin{Prop}\label{Prop:P}
Let $\Gamma= \lower0.25ex\hbox{\LARGE $\ast$}_{i=1}^\infty \Gamma_i$ be an infinite free product group.
Then the following property $\mathcal{R}$ of Cantor systems is $G_\delta$-dense in $\mathcal{S}(\Gamma, X)$.
\begin{itemize}
\item[{\rm (}$\mathcal{R}${\rm ):}]For any finite family $\mathcal{U}=\{U_1, \ldots, U_n\}$ of
mutually disjoint proper clopen subsets of $X$,
there are infinitely many $i\in \mathbb{N}$ satisfying the following condition.
The group $\Gamma_i$ contains a torsion-free element $s$ satisfying
$sU_j=U_{j+1}$ for all $j$.
\end{itemize}
Here we put $U_{n+1}:=U_1$ as before.
\end{Prop}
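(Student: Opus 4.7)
The plan is to express property $\mathcal{R}$ as a countable intersection of open dense subsets of $\mathcal{S}(\Gamma, X)$ and to conclude by the Baire category theorem. Enumerate the finite families $\mathcal{U}=\{U_1,\ldots,U_n\}$ of mutually disjoint proper clopen subsets of $X$; there are only countably many, because $X$ has countably many clopen subsets. For each such $\mathcal{U}$ and each positive integer $N$, set
\[
O_{\mathcal{U},N}:=\bigcup_{i\geq N}\ \bigcup_{\substack{s\in \Gamma_i\\ s\ \text{torsion-free}}} \{\alpha\in \mathcal{S}(\Gamma,X):\alpha_s(U_j)=U_{j+1}\ \text{for all}\ j\}.
\]
Then $\mathcal{R}=\bigcap_{\mathcal{U},N}O_{\mathcal{U},N}$, so it is enough to show that each $O_{\mathcal{U},N}$ is open and dense.

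Openness is a short check using the uniform-convergence topology on ${\rm Homeo}(X)$: if $\alpha_s(U_j)=U_{j+1}$ for all $j$, then because each clopen $U_{j+1}$ is at positive distance from its complement, any $\beta$ whose values on $s$ and $s^{-1}$ are sufficiently uniformly close to those of $\alpha$ will again satisfy $\beta_s(U_j)=U_{j+1}$. Hence the innermost set is open and unions over $i$ and $s$ remain open.

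The density step is the main content. Fix a basic neighborhood of some $\beta\in\mathcal{S}(\Gamma,X)$ determined by a finite set $F\subset\Gamma$ and $\epsilon>0$. Each $w\in F$ is a word in generators from only finitely many free factors, so some index $i\geq N$ does not appear in the free-product expression of any $w\in F$. By our standing assumption, $\Gamma_i$ contains a torsion-free element $s$, and Lemma~\ref{Lem:R} (applied with $\Lambda=\Gamma_i$) furnishes a Cantor system $\gamma\colon \Gamma_i\curvearrowright X$ with $\gamma_s(U_j)=U_{j+1}$ for every $j$. Define $\alpha\in\mathcal{S}(\Gamma,X)$ by $\alpha|_{\Gamma_i}:=\gamma$ and $\alpha|_{\Gamma_k}:=\beta|_{\Gamma_k}$ for every $k\neq i$; the universal property of the free product extends this to a genuine action of $\Gamma$. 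By the choice of $i$, the decomposition of any $w\in F$ involves only generators from $\Gamma_k$'s with $k\neq i$, whence $\alpha_w=\beta_w$. Consequently $\alpha$ lies in the prescribed neighborhood, and by construction $\alpha\in O_{\mathcal{U},N}$.

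The main obstacle is precisely this density argument: one must install the combinatorial pattern $sU_j=U_{j+1}$ into an arbitrary action $\beta$ while not disturbing the action of finitely many prescribed group elements. The free product structure of $\Gamma$ together with Lemma~\ref{Lem:R} resolves this tension by localizing the required modification to a single free factor $\Gamma_i$ disjoint from the supports of the words in $F$. Once openness and density of each $O_{\mathcal{U},N}$ are in hand, Baire's theorem yields that $\mathcal{R}$ is $G_\delta$-dense, as claimed.
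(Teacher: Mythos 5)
Your proof is correct and follows essentially the same route as the paper: the sets $O_{\mathcal{U},N}$ are exactly the paper's properties $\mathcal{R}(N,\mathcal{U})$, openness is the same clopen-set observation, and density is obtained by grafting a Lemma~\ref{Lem:R} action onto free factors of large index without disturbing the finitely many prescribed group elements. The only (immaterial) difference is that you modify a single factor $\Gamma_i$ and argue with basic neighborhoods, whereas the paper replaces all factors $\Gamma_k$ with $k\geq m$ and lets $m\to\infty$.
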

\begin{proof}
For any $i \in \mathbb{N}$ and a family $\mathcal{U}$ as stated,
we say that an element $\alpha\in \mathcal{S}(\Gamma, X)$ has property
$\mathcal{R}(i, \mathcal{U})$ if it satisfies the following condition.
There are $k \geq i$ and a torsion-free element $s\in \Gamma_k$ satisfying
$sU_j=U_{j+1}$ for all $j$.
Then observe that for any two clopen subsets $U$ and $V$ of $X$,
the set
\[\{\varphi\in {\rm Homeo}(X): \varphi(U)=V\}\]
is clopen in ${\rm Homeo}(X)$.
This shows that property $\mathcal{R}(i, \mathcal{U})$
is open in $\mathcal{S}(\Gamma, X)$.

To show the density of $\mathcal{R}(i, \mathcal{U})$,
for each $m \in \mathbb{N}$, take a Cantor system
$\varphi_m \colon \Gamma_m \curvearrowright X$ as in Lemma \ref{Lem:R}.
Let $\alpha \in \mathcal{S}(\Gamma, X)$ be given.
Then, for each $m\in \mathbb{N}$,
we define $\alpha^{(m)} \in \mathcal{S}(\Gamma, X)$ as follows.
\begin{eqnarray*}
 \alpha^{(m)}|_{\Gamma_k}:=\left\{ \begin{array}{ll}
 \alpha|_{\Gamma_k}& {\rm\ for \ }k< m,\\
 \varphi_k & {\rm\ for \ }k\geq m.\\
 \end{array} \right.
 \end{eqnarray*} 
Then each $\alpha^{(m)}$ satisfies property $\mathcal{R}(i, \mathcal{U})$
and the sequence $(\alpha^{(m)})_{m=1}^\infty$ converges to $\alpha$.
This proves the density of $\mathcal{R}(i, \mathcal{U})$.

Now observe that property $\mathcal{R}$ is equivalent to
the intersection $\bigwedge_{i, \mathcal{U}} \mathcal{R}(i, \mathcal{U})$.
Since there are only countably many clopen subsets in $X$,
the intersection is taken over a countable family.
Now the Baire category theorem completes the proof.
\end{proof}
\begin{Rem}\label{Rem:P}
It is not hard to check that
an action with $\mathcal{R}$ is a boundary in Furstenberg's sense
(see Definition 3.8 of \cite{KK} for the definition).
Also, by Theorem 5 of \cite{LaS}, property $\mathcal{R}$ with topological freeness implies
the pure infiniteness of the reduced crossed product.
\end{Rem}
\begin{Rem}
Since every infinite group admits a weak mixing Cantor system of all orders (e.g., the Bernoulli shift),
in a similar way to the proof of Proposition \ref{Prop:P}, it can be shown that weak mixing of all orders is
$G_\delta$-dense for infinite free product groups.
Here recall that a topological dynamical system $\alpha$ is said to be weak mixing of all orders
if for any $n\in \mathbb{N}$, the diagonal action of $n$ copies of $\alpha$ has a dense orbit.
Similarly, it can also be shown that the set of disjoint pairs $(\alpha, \beta) \in \mathcal{S}(\Gamma, X)^2$ is generic in $\mathcal{S}(\Gamma, X)^2$.
Here recall that two minimal dynamical systems are disjoint
if and only if their diagonal action is minimal.
\end{Rem}
\begin{Rem}\label{Rem:K}
Consider the case $\Gamma =\mathbb{F}_\infty$.
Then by the Pimsner--Voiculescu exact sequence \cite{PV},
property $\mathcal{R}$ implies $K_0(C(X)\rtimes_r \mathbb{F}_\infty)=0$.
We also have $K_1(C(X)\rtimes_r \mathbb{F}_\infty) \cong \mathbb{Z}^{\oplus \infty}$ for any Cantor system of $\mathbb{F}_\infty$.
This with the classification theorem of Kirchberg--Phillips \cite{Kir}, \cite{Phi} shows that
generically the crossed products give only a single \Cs -algebra.
However, we remark that there are continuously many Kirchberg algebras which are realized as
the reduced crossed product of an amenable minimal free Cantor system of $\mathbb{F}_\infty$ \cite{Suz}.
\end{Rem}

The next proposition says that property $\mathcal{R}$
implies the non-existence of nontrivial $\Gamma$-invariant closed subspace of $C(X)$.
This result may be of independent interest.
\begin{Prop}\label{Prop:inv}
Assume $\alpha\in \mathcal{S}(\Gamma, X)$ satisfies $\mathcal{R}$.
Then there is no $\Gamma$-invariant closed subspace of $C(X)$
other than $0$, $\mathbb{C}$, or $C(X)$.
In particular $\mathcal{R}$ implies primeness.
\end{Prop}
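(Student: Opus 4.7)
My plan is to argue by duality in $M(X) = C(X)^{\ast}$. Let $V \subset C(X)$ be a closed $\Gamma$-invariant subspace with $V \neq C(X)$, so that $V^\perp$ is a nonzero weak-$\ast$ closed $\Gamma$-invariant subspace of $M(X)$. The strategy is: take any nonzero $\mu \in V^\perp$, translate it under $\Gamma$ and pass to a weak-$\ast$ limit, producing a measure in $V^\perp$ of the form $c\delta_x$ (when $\mu(X) \neq 0$) or $c(\delta_x - \delta_y)$ with $x \neq y$ (when $\mu(X) = 0$). Propagating via $\Gamma$-invariance will then yield either $\delta_y \in V^\perp$ for every $y \in X$ (forcing $V = 0$) or $\delta_x - \delta_y \in V^\perp$ for every distinct $x, y \in X$ (forcing every $f \in V$ to be constant, so $V \subset \mathbb{C}$).

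Two consequences of property $\mathcal{R}$ will be the key tools. \emph{(i) Two-point transitivity}: for any distinct $a_1, a_2 \in X$ and any distinct targets $b_1, b_2 \in X$, some $s \in \Gamma$ sends $a_i$ arbitrarily close to $b_i$ for $i = 1, 2$. To prove this, choose pairwise disjoint clopen neighborhoods $A_i \ni a_i$ and $B_i \ni b_i$, and apply $\mathcal{R}$ to the $4$-element disjoint clopen family $\{A_1, A_2, B_1, B_2\}$ in the cyclic order $A_1 \to A_2 \to B_1 \to B_2 \to A_1$; the resulting torsion-free $s$ then satisfies $s^2(A_i) = B_i$. \emph{(ii) Two-measure strong proximality}: for mutually singular probability measures $\rho_1, \rho_2$ on $X$ and distinct targets $x, y \in X$, there is a sequence $s_n \in \Gamma$ with $s_n \rho_1 \to \delta_x$ and $s_n \rho_2 \to \delta_y$ weak-$\ast$. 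This follows by approximating the support of each $\rho_j$ by a clopen set (Radon regularity, total disconnectedness, and mutual singularity together allow the approximating clopens to be chosen disjoint) and then invoking the construction of (i) to push these clopens into small clopen neighborhoods of $x$ and $y$. Single-measure strong proximality (for a single probability $\rho$) follows analogously, using $\mathcal{R}$ to swap a clopen capturing most of $\rho$'s mass with a clopen inside the desired target.

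Now let $0 \neq \mu \in V^\perp$. If $\mu(X) = c \neq 0$, single-measure strong proximality applied to $|\mu|/\|\mu\|$ gives $s_n \in \Gamma$ with $|s_n\mu| = s_n|\mu| \to \|\mu\|\delta_x$ weak-$\ast$; since total mass is preserved by pushforward, this forces $s_n \mu \to c\delta_x$ weak-$\ast$. Hence $\delta_x \in V^\perp$, and $\Gamma$-invariance together with the minimality of $\alpha$ (itself a straightforward consequence of $\mathcal{R}$) spreads this to $\delta_y \in V^\perp$ for every $y \in X$, forcing $V = 0$. If instead $\mu(X) = 0$, then since $\mu \neq 0$ and clopen sets generate the Borel $\sigma$-algebra of $X$, there exists a proper nonempty clopen $U$ with $a := \mu(U) \neq 0$. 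The restrictions $\mu|_U$ and $\mu|_{X \setminus U}$ are complex measures with disjoint clopen supports and total masses $a$ and $-a$ respectively. Applying (ii) to the probability measures $|\mu|_U|/\|\mu|_U\|$ and $|\mu|_{X \setminus U}|/\|\mu|_{X \setminus U}\|$ with distinct targets $x, y \in X$ produces $s_n \in \Gamma$ with $s_n \mu|_U \to a\delta_x$ and $s_n \mu|_{X \setminus U} \to -a\delta_y$ weak-$\ast$ (the total-mass factors coming from the polar decompositions of $\mu|_U$ and $\mu|_{X \setminus U}$). Thus $s_n \mu \to a(\delta_x - \delta_y) \in V^\perp$, and (i) with $\Gamma$-invariance yields $\delta_{x'} - \delta_{y'} \in V^\perp$ for every distinct $x', y' \in X$, so $f(x') = f(y')$ for all $f \in V$ and all $x', y'$, i.e., every $f \in V$ is constant and $V \subset \mathbb{C}$. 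Combined with $V \neq C(X)$, we conclude $V \in \{0, \mathbb{C}, C(X)\}$.

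The main obstacle I anticipate is the preliminary (ii): one must approximate the Borel supports of $\rho_1$ and $\rho_2$ by \emph{disjoint} clopen sets in a uniform way, and then set up the $4$-element disjoint clopen family required by the cyclic argument of (i) — in particular ensuring that the target clopens $B_x, B_y$ can be chosen disjoint from the source clopens after suitable shrinking. The total disconnectedness of the Cantor set is essential throughout: it permits arbitrary refinements of Borel supports into clopen pieces and allows one to always carve out the additional disjoint clopens needed to apply $\mathcal{R}$ to families of
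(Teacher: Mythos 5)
Your route is genuinely different from the paper's: you dualize, working with the weak-$\ast$ closed $\Gamma$-invariant annihilator $V^\perp\subset M(X)=C(X)^\ast$ and deriving proximality-type statements from $\mathcal{R}$, whereas the paper stays on the function side and runs a Powers-type averaging. There, one approximates $f\in V$ by a step function $\sum_i c_i\chi_{U_i}$ over a clopen partition, uses $\mathcal{R}$ to produce $s$ cyclically permuting the $U_i$, and observes that $\frac{1}{n}\sum_i s^i f s^{-i}$ is within $\epsilon$ of the constant $\frac{1}{n}\sum_i c_i$ (arranged to be of modulus $\geq\|f\|/2$ by refining the partition); this puts $\mathbb{C}$ inside $V$, and a second averaging over a partition fixing a piece on which $f$ vanishes, followed by moving $X\setminus U_0$ onto an arbitrary proper clopen set, puts every $\chi_U$ inside $V$. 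Both arguments run on the same engine (cyclic permutations of clopen families supplied by $\mathcal{R}$), just on opposite sides of the duality, and your overall architecture --- producing $c\delta_x$ or $c(\delta_x-\delta_y)$ in $V^\perp$ and propagating by two-point transitivity and weak-$\ast$ closedness --- is sound.

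However, the step you flag as the main obstacle is a genuine gap as written, and your proposed resolution (that total disconnectedness ``allows one to always carve out the additional disjoint clopens'') does not suffice. In (ii) the clopen sets $C_1,C_2$ carrying most of the mass of $\rho_1,\rho_2$ may satisfy $C_1\sqcup C_2=X$ (e.g.\ when $\rho_1,\rho_2$ are supported on complementary clopen sets, or when one of them has full support), and even when they do not, a prescribed target $x$ may lie inside $C_1$, so that no clopen neighborhood of $x$ disjoint from $C_1\cup C_2$ exists; the pairwise disjoint family required by $\mathcal{R}$ then cannot be formed. The fix is to abandon prescribed targets in (ii): since you propagate by (i) afterwards anyway, it suffices to reach \emph{some} pair $\delta_x-\delta_y$. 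Concretely, by pigeonhole on a fine clopen partition choose disjoint nonempty proper clopen sets $R,S$ with $(\rho_1+\rho_2)(R\cup S)<\epsilon$, replace $C_i$ by $C_i\setminus(R\cup S)$, and apply $\mathcal{R}$ to the family $\{C_1,R,C_2,S\}$ in this cyclic order to obtain $s$ with $sC_1=R$ and $sC_2=S$; the same small-measure-target device repairs the single-measure case. A smaller instance of the same issue occurs in (i) when $\{a_1,a_2\}\cap\{b_1,b_2\}\neq\emptyset$, where the four neighborhoods cannot be chosen pairwise disjoint; pass through an auxiliary pair disjoint from both. With these repairs your proof goes through, but as submitted the key lemma (ii) is asserted rather than proved.
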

\begin{proof}
Let $V$ be a closed $\Gamma$-invariant subspace of $C(X)$
other than $0$ or $\mathbb{C}.$
We first show that $V$ contains $\mathbb{C}$.
Take a nonzero function $f\in V$. 
Then for any $\epsilon>0$,
there is a partition $\mathcal{U}:=\{ U_1, \ldots, U_n\}$ of $X$ by proper clopen sets
and complex numbers $c_1, \ldots, c_n$ with $|c_1|=\|f\|$
such that with $g:=\sum_{i=1}^n c_i \chi_{U_i}$,
we have $\|f-g\|<\epsilon$.
Put $c:=\frac{1}{n} \sum_{i=1} ^n c_i$.
By replacing $\mathcal{U}$ by dividing $U_1$ into sufficiently many clopen subsets and replacing the sequence $(c_i)_i$ suitably,
we may assume $|c| \geq \|f\|/2$.
By property $\mathcal{R}$, we can take $s\in \Gamma$ with
$sU_i= U_{i+1}$ for all $i$.
We then have
$\sum_{i=1}^n s^i gs^{-i}=\sum_{i=1}^n c_i$.
This yields the inequality
\[\|\frac{1}{n}\sum_{i=1}^n s^ifs^{-i} - c\|< \epsilon.\]
Since $\epsilon>0$ is arbitrary and $|c| \geq \|f\|/2$,
we obtain $\mathbb{C}\subset V$.

From this, we can choose a nonzero function $f\in V$
with $0\in f(X)$.
For any $\epsilon >0$,
take a partition $\mathcal{U}=\{U_0, U_1, \ldots, U_n\}$ of $X$ by proper clopen sets
and complex numbers $c_1, \ldots, c_n$
such that with $g:=\sum_{i=1}^n c_i \chi_{U_i}$,
we have $\|f-g\|<\epsilon$.
Put $c:=\frac{1}{n}\sum_{i=1}^n c_i$.
As before, we may assume $|c|\geq \|f\|/2$.
By using property $\mathcal{R}$ to the family $\{U_1, \ldots, U_n\}$, we can take $s\in \Gamma$ satisfying
$sU_0=U_0$ and $sU_i=U_{i+1}$ for $1\leq i <n$.
Then we have $\frac{1}{n}\sum_{i=1}^{n} s^ig s^{-i} =c\chi_{X\setminus U_0}$.
Now let $U$ be any proper clopen subset of $X$.
Take $t\in \Gamma$ with $t(X \setminus U_0)=U$.
(To find such $t$, use property $\mathcal{R}$ twice.)
We then have 
\[t(\frac{1}{n}\sum_{i=1}^{n} s^ig s^{-i})t^{-1} =ct(\chi_{X\setminus U_0})t^{-1}=c\chi_U.\]
This shows the inequality
\[\|(\frac{1}{n}\sum_{i=1}^{n} ts^if s^{-i}t^{-1}) - c\chi_U\|< \epsilon.\]
Since $\epsilon>0$ is arbitrary, this proves $\chi_U\in V$.
Since $U$ is arbitrary, we obtain $V=C(X)$.
\end{proof}

We need the following restricted version of the Powers property for free product groups.
Although the proof is essentially contained in \cite{Pow}, for completeness, we include a proof.
\begin{Lem}[Compare with Lemma 5 of \cite{Pow} and Lemma 5 of \cite{dlS}]\label{Lem:Pow}
Let $\Lambda_1, \Lambda_2$ be groups
and set $\Lambda:=\Lambda_1 \ast \Lambda_2$.
Let $s\in \Lambda_1$, $t\in \Lambda_2$ be
torsion-free elements.
Then for any finite subset $F$ of $\Lambda\setminus \{e\}$,
there are a partition $\Lambda=D\sqcup E$ of $\Lambda$
and elements $u_1, u_2, u_3\in \langle s, t \rangle$ with the following properties. 
\begin{enumerate}[\upshape(1)]
\item
$fD\cap D= \emptyset$ for all $f\in F$.
\item
$u_jE\cap u_kE=\emptyset$ for any two distinct $j, k \in \{1, 2, 3\}$.
\end{enumerate}
\end{Lem}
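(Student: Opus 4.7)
My plan is to adapt the Powers trick to the free product setting, exploiting the normal form of $\Lambda = \Lambda_1 \ast \Lambda_2$ and the free subgroup $\langle s, t \rangle \cong \mathbb{Z} \ast \mathbb{Z}$ sitting inside it (which is genuinely free because $s, t$ are torsion-free elements of distinct free factors). Recall that every $\gamma \in \Lambda \setminus \{e\}$ has a unique normal form as an alternating product $a_1 a_2 \cdots a_m$ with $a_i \in (\Lambda_1 \cup \Lambda_2) \setminus \{e\}$; write $|\gamma| := m$ and $\ell := \max_{f \in F} |f|$.

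First I would fix $N \gg \ell$ and define $D \subset \Lambda$ to be the set of elements whose normal form begins with a carefully chosen ``structured head'' drawn from $\langle s, t \rangle$. A workable candidate is to take $D$ to consist of those $\gamma$ whose normal form starts with a long, rigid prefix such as $s^N t^N s^N t^N \cdots$ of length $M \gg \ell$, or a variant whose exponent pattern is not itself a prefix of any word of length $\leq \ell$ (to avoid the spurious alignment $f = s^N t^N \cdots$ that would force $fd \in D$). Set $E := \Lambda \setminus D$.

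For condition (1), I would verify $fD \cap D = \emptyset$ by a direct cancellation analysis in the normal form. Writing $f \in F$ as $f_1 \cdots f_k$ with $k \leq \ell$, the product $fd$ for $d \in D$ is obtained by merging $f_k$ with $d$'s first block at the boundary (which either simply concatenates, or partially cancels in $\langle s \rangle$ or $\langle t \rangle$). Because $|f| \leq \ell$ is much smaller than the length $M$ of $D$'s defining prefix, the leading blocks of $fd$ inherit from $f_1$ (possibly modified) and cannot simultaneously reproduce the full rigid pattern required for membership in $D$; the precise choice of the exponent pattern in the definition of $D$ is made to rule out each of the finitely many cancellation scenarios. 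For condition (2), I would take $u_1 := e$ and $u_2, u_3 \in \langle s, t \rangle$ to be two elements whose own leading normal-form blocks force $u_j E$ and $u_k E$ to inhabit disjoint ``cylinders'' in the normal-form tree of $\Lambda$ — for instance, $u_j := (s^N t^N)^{jM}$ so that $u_j e$ for $e \in E$ has a structured prefix of length $2jM$ arising entirely from $u_j$ (since $e \in E$ fails the structure at position 1), and different $j$ produce prefixes of different lengths.

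The main obstacle is the combinatorial bookkeeping in the free-product normal form: the definition of $D$ must be tuned so that (1) is watertight against every cancellation pattern (including the troublesome case where $f$ itself starts with a power of $s$ or $t$ that accidentally matches $D$'s prefix, and the dual case where $f$ ends in $\langle s \rangle$ or $\langle t \rangle$ and partially absorbs $d$'s leading $s^N$), while simultaneously $E$ must be ``thin'' enough for three $\langle s, t \rangle$-translates to pull apart. This calibration is exactly the content of Powers' original Lemma 5 in \cite{Pow} and its free-product extension in \cite{dlS}, and I expect the paper's argument to verify the details of a construction along these lines.
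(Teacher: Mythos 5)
Your overall plan---conjugate $F$ to control heads and tails of its elements, then separate translates using cylinder sets in the normal form of $\Lambda_1\ast\Lambda_2$---is the right one, but you have the roles of $D$ and $E$ reversed, and this breaks condition (2). You define $D$ to be the \emph{thin} set of elements beginning with a long rigid prefix $p$, so that $E=\Lambda\setminus D$ is the complement of a cylinder. You then claim that for $u_j=(s^Nt^N)^{jM}$ the product $u_jx$ with $x\in E$ retains the full structured prefix of $u_j$ ``since $x$ fails the structure at position $1$.'' This is false: failing to begin with $p$ in no way prevents $x$ from beginning with a long chunk of $u_j^{-1}$. Concretely, $u_j^{-1}=(t^{-N}s^{-N})^{jM}$ begins with a $\Lambda_2$-syllable, hence does not begin with $p$, hence lies in $E$; therefore $e=u_ju_j^{-1}\in u_jE$ for every $j$, and also $e\in u_1E=E$, so the three sets $u_1E,u_2E,u_3E$ all contain the identity and are not pairwise disjoint. (You even remark that ``$E$ must be thin enough for three translates to pull apart,'' but your $E$ is the complement of a thin set.) One could in principle repair (2) by arranging that every difference $u_k^{-1}u_j$ both begins with $p$ and ends with $p^{-1}$, say $u_j=pc_jp^{-1}$, but that is not what you propose, and it needs a separate cancellation argument.

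The paper avoids all of this by making $E$ the thin set. It fixes $z=ts^n$ with $n$ so large that every element of $zFz^{-1}$ begins with the syllable $t$ and ends with $t^{-1}$, lets $E'$ be the set of elements whose normal form begins with the single syllable $t$, and puts $E:=z^{-1}E'$, $D:=\Lambda\setminus E$. Then for $f'\in zFz^{-1}$ and $d'\notin E'$ the product $f'd'$ still begins with $t$, because the cancellation between the terminal $t^{-1}$ of $f'$ and the first syllable of $d'$ cannot propagate to the leading $t$; this gives (1). For (2), $u_j:=s^jz$ maps $E$ onto the set of words beginning with $s^j$, and these cylinders are pairwise disjoint for $j=1,2,3$ since $s$ is torsion-free. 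Your treatment of (1) via an aperiodic marker prefix could likely be pushed through with the bookkeeping you describe, but the harder half of the lemma is (2), and there the proposal as written has a genuine gap.
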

\begin{proof}
Let $F \subset \Lambda\setminus \{e\}$ be given.
Then for sufficiently large $n\in \mathbb{N}$, with $z:=ts^n$,
any element of
$zFz^{-1}$
is started with $t$ and ended with $t^{-1}$.
Here for $u \in \Lambda_i \setminus \{e \}$,
we say an element $w$ of $\Lambda$ is started with $u$
if $w=uw_1\ldots w_n$ for some (possibly empty) sequence $w_1, \ldots, w_n$ with $w_j\in \Lambda_{k_j}\setminus \{e\}$ and
$i\neq k_1 \neq k_2 \neq \cdots \neq k_n$.
The word ``ended with $u$'' is similarly defined.
(Thus, in our terminology, the element $u^2$ is not started with $u$.)

Let $E'$ be the subset of $\Lambda$
consisting of all elements started with $t$.
Put $E:=z^{-1}E'$,
$D:=\Lambda \setminus E$, and $D':=\Lambda \setminus E'$.
Then note that $fD\cap D=\emptyset$ for all $f\in F$ if and only if
$f'D'\cap D'=\emptyset$ for all $f' \in zFz^{-1}$.
Since elements $f' \in zFz^{-1}$ are started with $t$ and ended with $t^{-1}$
but $D'$ consists of elements not started with $t$,
we have $f'D'\cap D'=\emptyset.$
Now for $j\in \{1, 2, 3\}$, put $u_j:=s^j z$.
Obviously each $u_j$ is contained in $\langle s, t\rangle$.
By definition, we have $u_jE=s^jE'$.
This shows that $u_jE$ consists of only elements started with $s^j$.
Therefore $u_1 E, u_2 E$, and $u_3 E$ are pairwise disjoint.
\end{proof}

Now we prove Main Theorem.
Before the proof, we remark that the AP is preserved under taking free products.
Hence $\Gamma$ has the AP if and only if each free product component $\Gamma_i$ has it.
See Section 12.4 of \cite{BO} for the detail.
\begin{Thm} \label{Thm:main}
Let $\Gamma$ be an infinite free product group with the AP.
Then, for $\alpha\in \mathcal{S}(\Gamma, X)$ with property $\mathcal{R}$,
there is no proper intermediate \Cs -algebra of the inclusion
${\rm C}^\ast_r(\Gamma)\subset C(X)\rtimes_r \Gamma$.
In particular, when additionally $\alpha$ is amenable,
then $C(X)\rtimes_r \Gamma$ is a minimal ambient nuclear \Cs -algebra
of the non-nuclear \Cs -algebra ${\rm C}^\ast_r(\Gamma)$.
\end{Thm}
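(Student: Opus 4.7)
The plan is to analyze any intermediate \Cs-algebra $B$ through two natural $\Gamma$-invariant closed subspaces of $C(X)$, namely $B \cap C(X)$ and $\overline{\{E(b) : b \in B\}}$; Proposition \ref{Prop:inv} will force each of these to be $\mathbb{C}$ or $C(X)$, and I will reduce the problem to ruling out the ``bad'' possibility for the second via a stabilized Powers averaging supplied by property $\mathcal{R}$ and Lemma \ref{Lem:Pow}.

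First, $B \cap C(X)$ is a unital $\Gamma$-invariant \Cs-subalgebra, hence a $\Gamma$-invariant closed subspace of $C(X)$ containing $\mathbb{C}$, so by Proposition \ref{Prop:inv} it equals $\mathbb{C}$ or $C(X)$. If $B \cap C(X) = C(X)$, then $B$ contains $C(X)$ and $\Gamma$, giving $B = C(X) \rtimes_r \Gamma$ immediately. So I assume $B \cap C(X) = \mathbb{C}$ and aim to show $B = {\rm C}^\ast_r(\Gamma)$. Because $\Gamma$ has the AP, an element of $C(X) \rtimes_r \Gamma$ lies in ${\rm C}^\ast_r(\Gamma)$ precisely when all of its Fourier coefficients $E(\cdot\, s^{-1})$, $s \in \Gamma$, are scalar; since $bs^{-1} \in B$ for $b \in B$ and $s \in \Gamma$, it suffices to show $E(b) \in \mathbb{C}$ for every $b \in B$. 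Now the linear subspace $\{E(b) : b \in B\}$ of $C(X)$ is $\Gamma$-invariant via the equivariance $E(wbw^{-1}) = w \cdot E(b)$ and contains $1 = E(1)$, so by a second application of Proposition \ref{Prop:inv} its closure equals $\mathbb{C}$ or $C(X)$; the desired conclusion corresponds to the closure being $\mathbb{C}$.

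Suppose for contradiction the closure equals $C(X)$. Then for any proper non-empty clopen $U \subsetneq X$ and any $\delta > 0$ I can pick $b \in B$ with $\|E(b) - \chi_U\| < \delta$. Property $\mathcal{R}$ applied to the one-element family $\{U\}$, for which the cyclic condition $sU_1 = U_{1+1}$ degenerates to $sU = U$, provides infinitely many $\Gamma_i$ containing torsion-free elements stabilizing $U$ setwise; I select two such elements $s \in \Gamma_i$ and $t \in \Gamma_j$ with $i \ne j$. The subgroup $\langle s, t \rangle$ then fixes $U$ and hence acts trivially on $\chi_U$. Truncating $b - E(b)$ via the AP to a finite Fourier sum supported in $\Gamma \setminus \{e\}$ and applying Lemma \ref{Lem:Pow} (with $\Gamma$ viewed as $\Gamma_i$ free-producted with the free product of the remaining components), I obtain Powers elements $u_1, u_2, u_3 \in \langle s, t \rangle$ together with the associated partition. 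Iterating the Powers averaging produces $\Gamma$-averages $A_N(b) \in B$ with $\|A_N(b - E(b))\| \to 0$, while $\|A_N(E(b)) - \chi_U\| \leq \delta$ throughout, because every averaging element stabilizes $\chi_U$. Hence $\chi_U$ lies within $\delta$ of $B$ for any $\delta > 0$, and the norm-closedness of $B$ forces $\chi_U \in B \cap C(X) = \mathbb{C}$, contradicting $\emptyset \ne U \subsetneq X$. This proves $B = {\rm C}^\ast_r(\Gamma)$. For the minimality statement, amenability of $\alpha$ gives nuclearity of $C(X) \rtimes_r \Gamma$ by Anantharaman-Delaroche, while ${\rm C}^\ast_r(\Gamma)$ is non-nuclear since $\Gamma$ contains $\mathbb{F}_2$ generated by torsion-free elements from two distinct components.

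The main obstacle will be the quantitative Powers estimate using only the three averaging elements from Lemma \ref{Lem:Pow}: the classical Powers bound $2/\sqrt{n}$ is not a strict contraction for $n = 3$, so geometric decay of the off-diagonal part must be achieved by iterating the lemma (with Fourier supports growing controllably, truncated via the AP at each step) or by a refined estimate exploiting the two-generator free structure of $\langle s, t \rangle$, as in the de la Salle strengthening referenced in Lemma \ref{Lem:Pow}. A delicate additional point is that the iteration must be arranged so that every averaging element remains inside $\langle s, t \rangle$, preserving the stabilization of $\chi_U$ throughout; balancing Powers cancellation against this stabilizer constraint will be the hard part.
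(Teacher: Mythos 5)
Your proposal is correct and follows essentially the same route as the paper: the dichotomy via Proposition \ref{Prop:inv} applied to $\overline{E(B)}$, Zacharias' AP result to handle the case $E(B)=\mathbb{C}$, and property $\mathcal{R}$ plus Lemma \ref{Lem:Pow} to run a Powers averaging over $\langle s_1,s_2\rangle$ (viewing $\Gamma$ as $\Gamma_i$ free-producted with the rest) that fixes $\chi_U$ and kills the off-diagonal part. The quantitative worry in your last paragraph is exactly what Lemma 5 of \cite{dlS} resolves (the single-step three-element bound is $2\sqrt{2}/3<1$, and the iterates stay in $\langle s_1,s_2\rangle$ because each application of Lemma \ref{Lem:Pow} produces its $u_j$ there), which is precisely how the paper invokes it.
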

\begin{proof}
Let $A$ be an intermediate \Cs -algebra of the inclusion
${\rm C}^\ast_r(\Gamma)\subset C(X)\rtimes_r \Gamma$.
We first consider the case $E(A)=\mathbb{C}$.
In this case, thanks to Theorem 3.2 of \cite{Zac} (see also Proposition 3.4 of \cite{Suz2}),
we have the equality $A={\rm C}^\ast_r(\Gamma)$.

We next consider the case $E(A)\neq \mathbb{C}$.
In this case, by Proposition \ref{Prop:inv}, $E(A)$ is dense in $C(X)$.
Let $U$ be a proper clopen subset of $X$.
Let $\epsilon>0$ be given.
Then take a self-adjoint element $x\in A$ with $\| E(x)-\chi_U\|<\epsilon$.
By property $\mathcal{R}$, there are torsion-free elements $s_1\in \Gamma_i$ and $s_2 \in \Gamma_j$ with $i\neq j$ 
which fix $\chi_U$. Put $\Lambda:=\langle s_1, s_2 \rangle$.
Take $y\in C(X)\rtimes_{\rm alg} \Gamma$ satisfying
$E(y)=\chi_U$ and $\|y-x\|<\epsilon$.
By Lemma \ref{Lem:Pow},
we can apply the Powers argument, Lemma 5 of \cite{dlS}, by elements of $\Lambda$.
Iterating the Powers argument sufficiently many times, we obtain a sequence $t_1, \ldots, t_n \in \Lambda$
satisfying the inequality
\[\|\frac{1}{n}\sum_{i=1}^n t_i(y-\chi_U)t_i^{-1}\|<\epsilon.\]
Since $\chi_U$ is $\Lambda$-invariant,
we have
\[\|\frac{1}{n}\sum_{i=1}^n t_i x t_i^{-1}-\chi_U \|<2\epsilon.\]
Since $\epsilon >0 $ is arbitrary,
this shows $\chi_U\in A$.
Therefore $A=C(X)\rtimes_r\Gamma$.
\end{proof}
\begin{Rem}\label{Rem:ma}
It is impossible to find a minimal ambient nuclear \Cs -algebra
of a non-nuclear \Cs -algebra by maximality arguments.
From the outside, it is shown in \cite{Suz2} that
the decreasing sequence of nuclear \Cs -algebras need not be nuclear.
From the inside, it can be shown that the increasing union of 
non-nuclear \Cs -algebras can be nuclear.
Here we give an example.
Let $A$ be a unital nuclear \Cs-algebra and let $B$ be a non-nuclear \Cs -subalgebra of $A$ containing the unit of $A$.
Put $A_n:=A^{\otimes n}\otimes B$ for each $n\in \mathbb{N}$.
Then they are canonically identified with \Cs -subalgebras
of the infinite tensor power $A^{\otimes \infty}$ of $A$.
Each $A_n$ is not nuclear
but their increasing union is the
nuclear \Cs -algebra $A^{\otimes \infty}$.
\end{Rem}
\begin{Rem}
When $\Gamma$ is exact without the AP (e.g., $\Gamma={\rm SL}(3, \mathbb{Z})$ \cite{LS}),
for any amenable Cantor system
$\Gamma \curvearrowright X$ with $\mathcal{R}$,
proper intermediate \Cs -algebras of ${\rm C}^\ast_r(\Gamma)\subset C(X)\rtimes_r \Gamma$.
are contained in the following \Cs -algebra.
\[A:=\{x\in C(X)\rtimes_r\Gamma: E(xs)\in \mathbb{C}{\rm \ for\ all\ }s\in \Gamma\}.\]
By Proposition 2.4 in \cite{Suz2}, any such a \Cs -algebra does not have the OAP.
Hence $C(X)\rtimes_r \Gamma$ is a minimal ambient nuclear \Cs -algebra of ${\rm C}^\ast_r(\Gamma)$
and is an ambient nuclear \Cs -algebra of the non-OAP \Cs -algebra $A$
with no proper intermediate \Cs -algebras.
\end{Rem}
\begin{Rem}
Let $\Gamma$ be an exact infinite free product group.
We show that amenable Cantor systems of $\Gamma$ with property $\mathcal{R}$ are not unique at the level of continuous orbit equivalence. (Recall that two free Cantor systems are continuously orbit equivalent
if and only if their transformation groupoids are isomorphic. Here we regard it as a definition. For the precise definition,
see Definition 5.4 of \cite{Suz} for instance.)
We also show that minimal ambient nuclear \Cs -algebras of ${\rm C}^\ast_r(\Gamma)$
are not unique in the following sense:
there is no isomorphism between them that is identity on ${\rm C}^\ast_r(\Gamma)$.
We say that two ambient \Cs -algebras are conjugate if such an isomorphism exists.
Let $[[\alpha]]$ denote the topological full group of $\alpha$.
That is, the group of homeomorphisms $\varphi$ on $X$ which admit a partition $(U_s)_{s\in \Gamma}$ of $X$
by clopen subsets with
$\varphi|_{U_s}=\alpha_s|_{U_s}$ for all $s\in \Gamma$.
Then the set
\[E[[\alpha]]:=\{\theta \in \mathbb{R}/\mathbb{Z}: {\rm some\ } \varphi \in [[\alpha]] {\rm \ factors\ the \ rotation\ }
R_\theta\colon \mathbb{T} \rightarrow \mathbb{T} \}\]
is invariant under continuous orbit equivalence.
Note that $E[[\alpha]]$ is countable by the metrizability of $X$.
For any amenable Cantor system $\beta \colon \Gamma \curvearrowright Y$,
by working on the closed subspace
$\{\alpha \in \mathcal{S}(\Gamma, X):\alpha|_{\Gamma_1}=(\beta^{\otimes \mathbb{N}})|_{\Gamma_1}\}$
instead of $\mathcal{S}(\Gamma, X)$,
we can find an amenable Cantor system $\alpha$ with $\mathcal{R}$ in this set.
Here we identify $X$ with $Y^\mathbb{N}$
and we denote by $\beta^{\otimes \mathbb{N}}$ the diagonal action of infinitely many copies of $\beta$.
Hence, with the aid of (a modification of) Lemme \ref{Lem:ext}, for any irrational number $\theta$, we can find an amenable free Cantor system $\alpha$ of $\Gamma$ with $\mathcal{R}$
satisfying $\theta \in E[[\alpha]]$.
Thus there is a family of continuously many amenable free Cantor systems with $\mathcal{R}$
whose members are pairwise not continuously orbit equivalent.
We show that their crossed products give pairwise non-conjugacy ambient \Cs -algebras.
Suppose two of them are conjugate.
Then the composite of a conjugating isomorphism
with the canonical conditional expectation gives a $\Gamma$-equivariant unital completely positive map
between two $C(X)$. This is impossible by Lemma 3.10 of \cite{KK} (with Remark \ref{Rem:P}) and Proposition \ref{Prop:inv}.
\end{Rem}

\section{Further examples}\label{sec:ex}
We close this paper with the following result on minimal tensor products.
This emphasizes the tightness of ambient \Cs -algebras obtained in Theorem \ref{Thm:main}.
\begin{Prop}\label{Prop:tensor}
Let $A$ be a simple \Cs -algebra.
Let $\Gamma$ be an infinite free product group with the AP.
Let $\alpha \colon \Gamma \curvearrowright X$ be a Cantor system with property $\mathcal{R}$.
Then the inclusion $A\otimes {\rm C}^\ast_r(\Gamma) \subset A\otimes (C(X)\rtimes _r \Gamma)$ has no proper intermediate \Cs -algebra.
\end{Prop}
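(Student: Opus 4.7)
The plan is to adapt the two-case structure of the proof of Theorem \ref{Thm:main} to the tensor setting. Let $B$ be an intermediate \Cs-algebra and write $\tilde{E} := \id_A \otimes E \colon A \otimes (C(X) \rtimes_r \Gamma) \to A \otimes C(X)$ for the canonical extension of $E$. Then $\tilde{E}(B)$ is a closed self-adjoint $\Gamma$-invariant $A$-sub-bimodule of $A \otimes C(X)$ containing $A \otimes \mathbb{C}$, and I would split according to whether this containment is strict.

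When $\tilde{E}(B) = A \otimes \mathbb{C}$, I would appeal to a tensor-product form of Theorem 3.2 of \cite{Zac} (cf. Proposition 3.4 of \cite{Suz2}): the AP of $\Gamma$ produces a net of Herz--Schur multipliers with finite support converging to the identity on $C(X) \rtimes_r \Gamma$, and tensoring each with $\id_A$ shows that $b \in A \otimes \Cs_r(\Gamma)$ precisely when $\tilde{E}(b(1 \otimes s^{-1})) \in A \otimes \mathbb{C}$ for every $s \in \Gamma$. Since $1 \otimes s^{-1} \in A \otimes \Cs_r(\Gamma) \subset B$, the hypothesis on $\tilde{E}(B)$ forces this condition on every $b \in B$, giving $B = A \otimes \Cs_r(\Gamma)$.

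When $\tilde{E}(B) \supsetneq A \otimes \mathbb{C}$, the key step is the following tensor-product analogue of Proposition \ref{Prop:inv}: \emph{every closed self-adjoint $\Gamma$-invariant $A$-sub-bimodule $V \subset A \otimes C(X)$ properly containing $A \otimes \mathbb{C}$ equals $A \otimes C(X)$}. To prove it, take a self-adjoint $v \in V \setminus (A \otimes \mathbb{C})$; subtracting $v(x_0) \otimes 1 \in A \otimes \mathbb{C}$ for a suitable $x_0$, assume $v(x_0) = 0$ while $v(y_0) \neq 0$ for some $y_0 \in X$. For $\epsilon > 0$, fix clopen $V_0 \ni x_0$ with $\|v|_{V_0}\| < \epsilon$ and clopen $W \ni y_0$ disjoint from $V_0$ with $\|v|_W - v(y_0) \otimes 1\| < \epsilon$, and form a clopen partition of $X$ consisting of $V_0$, a fine partition $\{P_1, \ldots, P_m\}$ of $X \setminus (V_0 \cup W)$ on which $v$ is approximated by constants $b_k$ up to $\epsilon$, and a subdivision of $W$ into $N_W \gg m$ pieces. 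Property $\mathcal{R}$ applied to the non-$V_0$ part yields $s \in \Gamma$ cyclically permuting those $N := N_W + m$ sets; since $s$ preserves their union, it fixes $V_0$ setwise. The average $\tilde{v} := \frac{1}{N}\sum_{k=0}^{N-1} s^k v s^{-k} \in V$ lies within $O(\epsilon)$ of $\bar{d} \otimes \chi_{X \setminus V_0}$, where $\bar{d} = \frac{N_W}{N} v(y_0) + \frac{1}{N}\sum_k b_k$; the bias $N_W \gg m$ ensures $\|\bar{d} - v(y_0)\| = O(\epsilon)$. Letting $\epsilon \to 0$ with $V_0$ held fixed and invoking closedness of $V$, we get $v(y_0) \otimes \chi_{X \setminus V_0} \in V$. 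Simplicity of $A$ forces $\overline{A v(y_0) A} = A$, so the $A$-bimodule structure of $V$ upgrades this to $A \otimes \chi_{X \setminus V_0} \subset V$; property $\mathcal{R}$ applied to $\{X \setminus V_0, U\}$ then translates this to $A \otimes \chi_U \subset V$ for every proper clopen $U$, so $V = A \otimes C(X)$.

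With $\tilde{E}(B) = A \otimes C(X)$ in hand, the Powers argument of Theorem \ref{Thm:main} adapts verbatim tensored with $\id_A$: for each proper clopen $U$ and each $c \in A$, pick $b \in B$ with $\tilde{E}(b) = c \otimes \chi_U$, use property $\mathcal{R}$ applied to $\{U\}$ to find torsion-free $s_1 \in \Gamma_i, s_2 \in \Gamma_j$ with $i \neq j$ fixing $\chi_U$, and apply Lemma \ref{Lem:Pow} to $\Lambda := \langle s_1, s_2 \rangle$ to produce $t_1, \ldots, t_n \in \Lambda$ with $\frac{1}{n}\sum_i t_i b t_i^{-1}$ arbitrarily close to the $\Lambda$-fixed element $c \otimes \chi_U$; this average lies in $B$, so $c \otimes \chi_U \in B$ as well. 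Combined with $A \otimes \Cs_r(\Gamma) \subset B$, these generate $A \otimes (C(X) \rtimes_r \Gamma)$, yielding $B = A \otimes (C(X) \rtimes_r \Gamma)$. The hard part will be the bimodule analogue of Proposition \ref{Prop:inv}: a naive cyclic average collapses $A$-valued coefficients into a single element of $A \otimes \mathbb{C}$, and accidental cancellation can easily make that element zero. The biased partition above avoids this by forcing the average to approximate the nonzero value $v(y_0)$; once such a single nontrivial tensor is extracted, simplicity of $A$ and the bimodule action promote it to the full slice $A \otimes \chi_{X \setminus V_0}$.
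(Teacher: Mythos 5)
Your overall architecture --- split on whether $\tilde E(B)$ collapses to $A\otimes\mathbb{C}$, dispose of the first case by the AP/Fourier-coefficient argument, and in the second case show that $\overline{\tilde E(B)}=A\otimes C(X)$ before running the Powers averaging --- is exactly the paper's (the paper writes $\Phi:=\id_A\otimes E$ for your $\tilde E$), and your first and last steps match the paper's almost verbatim. Where you genuinely diverge is the middle step. The paper does \emph{not} prove a bimodule analogue of Proposition \ref{Prop:inv}: it slices with a pure state $\varphi$ on $A$ to extract a non-scalar $f=(\varphi\otimes\id_{C(X)})(\Phi(b))\in C(X)$, uses the Akemann--Anderson--Pedersen excision theorem to find a norm-one positive $a\in A$ with $\Phi(aba)\approx a^2\otimes f$, and then uses simplicity of $A$ (via Lemma 2.3 of \cite{Zac0}) to convert $a^2$ into an arbitrary positive contraction $c$, so that $c\otimes f\in\overline{\Phi(B)}$; only at that point does it invoke the scalar Proposition \ref{Prop:inv} to get $c\otimes C(X)\subset\overline{\Phi(B)}$. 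You instead rerun the averaging argument of Proposition \ref{Prop:inv} directly for $A$-valued functions, with the biased partition preventing the averaged coefficient from cancelling, and feed simplicity in at the end through the $A$-bimodule structure. Your route is legitimate and more self-contained (no excision, no tensor-splitting-type lemma), at the cost of re-proving rather than quoting Proposition \ref{Prop:inv}; the paper's route is shorter because producing a single elementary tensor $c\otimes f$ lets it reuse the scalar statement as a black box.

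One step of yours is stated incorrectly, though it is repairable with material you already have: you cannot ``let $\epsilon\to 0$ with $V_0$ held fixed,'' because $V_0$ was chosen so that $\|v|_{V_0}\|<\epsilon$, and such a clopen neighbourhood of $x_0$ must in general shrink as $\epsilon\to 0$ (it could be held fixed only if $v$ vanished identically near $x_0$). The fix is to perform your translation step \emph{before} passing to the limit: for a fixed proper clopen $U$, use property $\mathcal{R}$ (a bounded number of times, as in the proof of Proposition \ref{Prop:inv}) to find $t_\epsilon$ with $t_\epsilon(X\setminus V_0^{(\epsilon)})=U$, so that $t_\epsilon\tilde v_\epsilon t_\epsilon^{-1}\in V$ lies within $O(\epsilon)$ of the \emph{fixed} element $v(y_0)\otimes\chi_U$; closedness of $V$ then gives $v(y_0)\otimes\chi_U\in V$, and the rest of your bimodule argument proceeds as written. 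Two minor further points: since $\tilde E(B)$ need not be closed, in the final Powers step one should take $b\in B$ with $\tilde E(b)$ only approximately equal to $c\otimes\chi_U$ (the argument of Theorem \ref{Thm:main} already accommodates this), and when $A$ is non-unital the conjugations by $1\otimes t_i$ should be interpreted via an approximate unit of $A$ to see that the averages stay in $B$.
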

\begin{proof}
Let $B$ be an intermediate \Cs -algebra of the inclusion
$A\otimes {\rm C}^\ast_r(\Gamma) \subset A\otimes (C(X)\rtimes _r \Gamma)$.
Put $\Phi:=\id _A \otimes E$.
Throughout the proof, we identify $A$ with a \Cs -subalgebra of $A\otimes C(X)$ in the canonical way.
Note that the image $\Phi(B)$ contains $A$.
When the equality
$\Phi(B)=A$ holds,
by Proposition 3.4 of \cite{Suz2} (with Exercise 4.1.3 of \cite{BO}),
we have $B=A\otimes {\rm C}^\ast_r(\Gamma)$.

Suppose $\Phi(B)\neq A$.
We observe first that for an element $x\in A\otimes C(X)$ satisfying
$(\varphi \otimes \id_{C(X)})(x) \in \mathbb{C}$ for all pure states $\varphi$ on $A$,
we have $x=(\id_A\otimes \psi)(x)\in A$ for any state $\psi$ on $C(X)$.
Hence we can choose a pure state $\varphi$ on $A$ and an element $b\in B$ satisfying
$f:=(\varphi\otimes \id_{C(X)})(\Phi(b)) \in C(X) \setminus \mathbb{C}$ and $\| f\| =1$.
Now let $\epsilon >0$ be given.
By the Akemann--Anderson--Pedersen excision theorem (Theorem 1.4.10 of \cite{BO}),
there is a positive element $a\in A$ of norm one satisfying
$\|\Phi(a ba)- a^2\otimes f \|< \epsilon$.
By the simplicity of $A$, for any $\epsilon>0$ and any positive contractive element $c \in A$,
there is a finite sequence $x_1, \ldots, x_n \in A$
satisfying the following conditions.
\begin{itemize}
\item
$\| \sum_{i=1}^n x_i a^2 x_i^\ast -c \|\leq \epsilon$.
\item
$\|\sum_{i=1}^n x_i x_i^\ast\| \leq 2$.
\end{itemize}
Indeed, when $A$ is unital, this follows from Lemma 2.3 of \cite{Zac0}.
When $A$ is non-unital, this follows from Proposition 1.4.5 of \cite{Ped}
and the proof of Lemma 2.3 of \cite{Zac0}.

For such a sequence,
we have
\[\| \Phi(\sum_{i=1}^n x_i a b a x_i^\ast)-c\otimes f)\|<3\epsilon.\]
This shows that the closure of $\Phi(B)$ contains $c \otimes f$.
Proposition \ref{Prop:inv} then shows that
the closure of $\Phi(B)$ contains the subspace $c \otimes C(X)$.
Now the proof of Theorem \ref{Thm:main} shows the equality $B = A\otimes (C(X)\rtimes _r \Gamma)$.
\end{proof}
\begin{Rem}
When $A$ is nuclear (or more generally, when $A$ has Wassermann's property (S) \cite{Was}),
Proposition \ref{Prop:tensor} follows from Theorem \ref{Thm:main} and tensor splitting results \cite{Zac0}, \cite{Zsi}.
\end{Rem}
\subsection*{Acknowledgement}
The author would like to thank David Kerr
for his suggestion to study generic properties of dynamical systems of non-amenable groups.
He thanks Narutaka Ozawa for letting him know the papers \cite{Zac0} and \cite{Zsi}.
He also thanks Yasuyuki Kawahigashi, who is his advisor, for helpful comments on the first draft of the paper.
This work was carried out while the author was visiting the University of M\"{u}nster
under the support of Program of Leading Graduate Schools, MEXT, Japan.
He acknowledges their hospitality.
The author was supported by Research Fellow
of the JSPS (No.25-7810) and Program of Leading Graduate Schools, MEXT, Japan.

\end{document}